\newtheorem{theorem}{Theorem}[section]
\theoremstyle{definition}
\newtheorem{definition}[theorem]{Definition}
\theoremstyle{plain}
\newtheorem{lem}[theorem]{Lemma}
\theoremstyle{plain}
\newtheorem{prop}[theorem]{Proposition}
\theoremstyle{plain}
\newtheorem{cor}[theorem]{Corollary}
\theoremstyle{remark}
\newtheorem{rem}[theorem]{Remark}
\theoremstyle{remark}
\theoremstyle{remark}
\begin{document}

\title[Strong Hamel functions and symmetries]{Strong Hamel functions and symmetries}

\author[Bucataru]{Ioan Bucataru}
\address{Faculty of  Mathematics \\ Alexandru Ioan Cuza University \\ Ia\c si, 
  Romania}
\email{bucataru@uaic.ro}
\urladdr{http://orcid.org/0000-0002-8506-7567}

\author[Cre\c tu]{Georgeta Cre\c tu}
\address{Department of  Mathematics \\ Gheorghe Asachi Technical University \\ Ia\c si, 
  Romania}
\email{cretuggeorgeta@gmail.com}
\urladdr{http://orcid.org/0000-0003-4197-0268}

\begin{abstract}
For the geodesic spray of a Finsler space, a strong Hamel function is a Hamel function that is the geodesic derivative of a $0$-homogeneous potential function. Similarly, strong dual symmetries and strong dynamical symmetries are geodesically invariant $1$-forms and vector fields, respectively, associated with $0$-homogeneous potential functions. We prove that strong Hamel functions can be characterized in terms of strong dual symmetries and strong dynamical symmetries. We show that projective deformations by strong Hamel functions preserve the $\chi$-curvature and analyse the relationship with other classes of functions (Funk and weak Funk functions) that preserve the curvature tensors under projective deformations.
\end{abstract}
\subjclass[2000]{53C60, 53B40, 37C79, 37K06, 70H33}

\keywords{Finsler metric, strong Hamel function, dual symmetry, dynamical symmetry}
  
  \maketitle
\section{Introduction}
Hamel functions were introduced by Hamel as solutions to Hilbert's fourth problem in \cite{Hamel03}, namely as Finsler functions projectively related to the flat spray. Therefore, Hamel functions originally emerged as solutions to the Euler-Lagrange equations associated with the flat spray. Nowadays, Hamel functions represent a more general class: they are $1$-homogeneous functions that satisfy the Euler-Lagrange equations associated with an arbitrary spray.      

Hamel functions, as well as strong Hamel functions, frequently appear in Finsler geometry. A Finsler manifold has vanishing $\chi$-curvature if and only if the $S$-function is a strong Hamel function, \cite{Shen21}. Hamel functions were also used in \cite{BC20a, BC20b} to extend Beltrami's Theorem to the Finslerian setting. In these two papers, we showed that for two projectively related Finsler functions, if one has constant flag curvature, then the other also has constant flag curvature if and only if the projective deformation is a strong Hamel function.

The connection between a strong Hamel function and a geodesically invariant $1$-form (a dual symmetry) was exploited in \cite{BCC21} to provide first integrals for a class of Finsler metric.

In Theorem \ref{thm:SHF}, we prove that for a geodesic spray, strong Hamel functions induce and are uniquely determined by strong dual symmetries and consequently by strong dynamical symmetries. 

We analyse various situations in which strong Hamel functions arise. For example, in Theorem \ref{thm:schi}, we show that the $\chi$-curvature of a Finsler metric (and in particular a Finsler metric of constant flag curvature) is preserved by projective deformations if and only if the projective factor is a strong Hamel function. We provide the relationship between Hamel functions and other classes of functions: namely Funk functions, which preserve the curvature tensor, and weak Funk functions, which preserve the Ricci scalar under projective deformations. We prove that a $1$-homogeneous function is a Funk function if and only if it is a Hamel function and a weak Funk function.

In a recent paper, Crampin \cite{Crampin24} studies various relationships between symmetries (invariant vector fields for the canonical symplectic form and the energy function) and geodesic invariants (first integrals) of Finsler spaces. In this work, we focus on obtaining geodesic invariant $1$-forms (strong dual symmetries, which are not exact) from some special Hamel function. The corresponding vector field, via the symplectic structure, is a symmetry of the geodesic spray, which we call a strong dynamical symmetry, but neither a Cartan symmetry nor an exact symmetry. 

\section{Preliminaries}
In this work, we consider $M$ a connected, real and n-dimensional oriented manifold, and assume that all geometric structures are smooth. We denote by $C^\infty(M)$ the set of smooth functions on $M$, by $\mathfrak{X}(M)$ the set of vector fields on $M$, and by $\bigwedge^k(M)$ the set of k-forms on $M$.
Local coordinates on $M$ are denoted by $(x^i)$, while induced local coordinates on the tangent bundle $TM$ are denoted by $(x^i, y^i)$, for $i\in \{1,...,n\}$.

We denote by $T_0M$, the tangent bundle with the zero section removed.
On $TM$ there are two canonical structures that we will use further: the Liouville vector field and the tangent structure, given in local coordinates by $\mathcal{C}=y^i{\partial}/{\partial y^i}$ and $J=dx^i\otimes {\partial}/{\partial y^i}$, respectively. 

In this work, inspired by Crampin \cite{Crampin24}, we will use the Liouville vector field as the primary test for positive homogeneity. A geometric object $\mathcal{G}$ on $T_0M$ is positively homogeneous of degree $p$ if $\mathcal{L}_{\mathcal{C}}\mathcal{G}=p\mathcal{G}$. For example, $\mathcal{L}_{\mathcal{C}}J=-J$, hence the tangent structure $J$ is $-1$ homogeneous. 

A system of second order ordinary differential equations on $M$,
\begin{eqnarray}
\label{geodesic-system}
\frac{d^2x^i}{dt^2}+2G^i\left(x,\frac{dx}{dt}\right)=0,
\end{eqnarray}
can be identified with a special vector field on $TM$,
\begin{eqnarray*}
G=y^i\frac{\partial}{\partial x^i}-2G^i(x,y)\frac{\partial}{\partial y^i},
\end{eqnarray*}
that satisfies $JG=\mathcal{C}$. This vector field is called a semispray. If additionally, $G\in \mathfrak{X}(T_0M)$ and satisfies $[\mathcal{C},G]=G$, we say that $G$ is a spray. For a spray $G$, the functions $G^i$ are $2$-homogeneous in the fibre coordinates.

If we reparameterise the second-order system \eqref{geodesic-system}, preserving the orientation of the parameter, we obtain a new system and hence a new spray $\widetilde{G}=G-2P\mathcal{C}$.
The function $P\in C^\infty(T_0M)$ is 1-homogeneous, meaning it satisfies $\mathcal{C}(P)=P$. The two sprays $G$ and $\widetilde{G}$ are called projectively related, while the function P is called
the projective deformation factor of the spray $G$. 

Every spray induces a canonical nonlinear connection, determined by the corresponding horizontal and vertical projectors, \cite{Grifone72}:
\begin{eqnarray*}
h=\frac{1}{2}(\operatorname{Id}-[G,J]),\ v=\frac{1}{2}(\operatorname{Id}+[G,J]), \ [G,J]=h-v. \label{hv}
\end{eqnarray*}
In this paper we will use the Fr\"olicher-Nijenhuis formalism, and the corresponding commutation formulae according to \cite[Appendix A]{GM00}. Here, $[G,J]$ denotes the Fr\"olicher-Nijenhuis bracket between the spray $G$ and the tangent structure $J$. The vector valued $1$-form $[G,J]$ is $0$-homogeneous. Indeed, using the graded-Jacobi identity for the Fr\"olicher-Nijenhuis bracket of vector valued forms,
\begin{eqnarray*}
[\mathcal{C}, [J,G]]+[G,[\mathcal{C}, J]]+ [J, [G,\mathcal{C}]]=0,
\end{eqnarray*}
and considering the homogeneity of $J$ and $G$, along with $[G,J]=-[J,G]$, we obtain $[\mathcal{C}, [J,G]]=0$. Therefore, the two projectors $h$ and $v$ are also $0$-homogeneous. Locally, the two projectors $h$ and $v$ can be expressed as follows:
\begin{eqnarray}
h & = & \frac{\delta}{\delta x^i}\otimes dx^i,\ v=\frac{\partial}{\partial y^i}\otimes \delta y^i, \quad \textrm{with}, \nonumber \\
\frac{\delta}{\delta x^i} & = & \frac{\partial}{\partial x^i}-N_i^j(x,y)\frac{\partial}{\partial y^j},\ \delta y^i=dy^i+N_j^i(x,y)dx^j,\ N_j^i(x,y)=\frac{\partial G^i}{\partial y^j}(x,y). \label{delta}
\end{eqnarray}

An important geometric structure induced by a spray $G$ is the curvature tensor $R$ associated to the nonlinear connection. It is the Nijenhuis tensor of the horizontal projector, which is the semi-basic vector valued $2$-form:
\begin{equation}\label{r}
R=\frac{1}{2}[h,h]=\frac{1}{2} R_{jk}^i\dfrac{\partial }{\partial y^i}\otimes dx^j\wedge dx^k, \ R^i_{kj}=\frac{\delta N^i_j}{\delta x^k} - \frac{\delta N^i_k}{\delta x^j}. 
\end{equation}
Since the horizontal projector $h$ is $0$-homogeneous, $[\mathcal{C}, h]=0$, then $2[\mathcal{C},R]=[\mathcal{C}, [h,h]]=0$, and hence, the curvature tensor $R$ is also $0$-homogeneous.

For the geometric setting induced by a spray, we also utilize the induced dynamical covariant derivative, which is a tensor derivation defined by its action on functions and vector fields:
\begin{eqnarray*}
\nabla(f)=G(f), \ \forall f\in C^{\infty}(TM), \quad \nabla
  X=h[G,hX]+v[G,vX], \ \forall X \in {\mathfrak X}(TM).
\end{eqnarray*}
For a spray $G$ and a function $L$ on $T_0M$, we consider the following semi-basic 1-form, called the Euler-Lagrange 1-form,
\begin{eqnarray}\label{EL}
\delta_GL & = & \mathcal{L}_Gd_JL-dL=d_J\mathcal{L}_GL-2d_hL = \nabla d_JL - d_hL \\ 
& = & \left\{G\left(\frac{\partial L}{\partial y^i}\right) - \frac{\partial L}{\partial x^i}\right\} dx^i = \left\{ \nabla\left(\frac{\partial L}{\partial y^i}\right) - \frac{\delta L}{\delta x^i}\right\} dx^i. \nonumber
\end{eqnarray}
If the function $L$ is $p$-homogeneous, then the Poincar\'e-Cartan $1$-form $d_JL$ is $(p-1)$-homogeneous, and the Euler-Lagrange $1$-form is $p$-homogeneous. 

Indeed, $\mathcal{L}_{\mathcal{C}}d_JL=d_J\mathcal{C}L+d_{[\mathcal{C}, J]}L=pd_JL-d_JL=(p-1)d_JL$ and, therefore: 
\begin{eqnarray*}
\mathcal{L}_{\mathcal{C}}\delta_GL = \mathcal{L}_{\mathcal{C}}\mathcal{L}_{G} d_JL - \mathcal{L}_{\mathcal{C}}L=\mathcal{L}_{G}\mathcal{L}_{\mathcal{C}}d_JL+\mathcal{L}_{[\mathcal{C}, G]}d_JL - pdL = p\mathcal{L}_{G} d_JL - pdL = p\delta_GL. 
\end{eqnarray*} 

\begin{definition}
By a \textit{(pseudo)-Finsler metric} we mean a continuous function $F : TM \rightarrow [0, +\infty)$, satisfying the following conditions:
\begin{enumerate}
\item $F$ is smooth and strictly positive on $T_0M$.
\item $F$ is positively homogeneous of order 1, meaning that $F(x,\lambda y)=\lambda F(x,y)$, for all $\lambda> 0$ and $(x,y)\in TM.$
\item   The metric tensor with components
\[g_{ij}(x,y)=\frac{1}{2}\frac{\partial^2 F^2}{\partial y^i\partial y^j} \text{ has rank $n$ on $T_0M$}.\]
\end{enumerate}
\end{definition}
\noindent For a Finsler metric, one requires a stronger regularity condition, namely the positive-definiteness of the metric tensor $g_{ij}$. In this work, we will only use the regularity condition $3)$.  
This regularity condition is equivalent to the fact that the Poincar\' e-Cartan 2-form of the regular energy Lagrangian $L=\frac{1}{2}F^2$, $\omega_{L} = -dd_JL=g_{ij}\delta y^i\wedge dx^j$, is non-degenerate and hence it is a
symplectic structure. Therefore, the equation
\begin{equation}\label{iGddJ}
i_Gdd_JL=-dL,
\end{equation}
uniquely determines a vector field $G$ on $T_0M$, which is called the \textit{geodesic spray} of the Finsler metric.

In Finsler geometry, there are several important non-Riemannian quantities, such as the $S-$function and the $\chi-$curvature \cite{Shen21}. For a fixed volume form, $\omega=\sigma(x)dx$ on $M$, we define the $S$-\emph{function} $S$ in terms of the distortion $\tau$:
\begin{eqnarray}
S=G\left(\tau\right), \quad \tau(x,y)=\ln\frac{\sqrt{\det g}}{\sigma}. \label{sfunction}
\end{eqnarray}  

The $\chi$-\emph{curvature} has several expression, one of which is given in terms of the $S$-function:
\begin{eqnarray}
\chi=\frac{1}{2}\delta_GS=\frac{1}{2}\left\{\nabla\left(\frac{\partial
  S}{\partial y^i}\right)-\frac{\delta S}{\delta x^i}\right\}dx^i.\label{chi}
\end{eqnarray}
Alternative expressions for the $\chi$-curvature were provided by Shen in \cite{Shen21}.

\section{Strong Hamel functions}

Hamel functions were originally introduced as solutions to Hilbert's fourth problem in \cite{Hamel03}, specifically for the flat spray. Nowadays, these functions represent a larger class, consisting of solutions to the Euler-Lagrange equations associated with an arbitrary spray.   

\begin{definition} \label{dfn:Hamel}
Consider $G$ a spray. We say that a non-trivial, $1$-homogeneous function $f\in C^{\infty}(T_0M)$ is called a \emph{Hamel function} for $G$ if it satisfies the Euler-Lagrange equation $\delta_Gf=0$. 

A Hamel function $f\in C^{\infty}(T_0M)$ is called a \emph{strong Hamel function} for $G$ if there exists a $0$-homogeneous function $f'\in C^{\infty}(T_0M)$ such that $f=G(f')$. We refer to the function $f'$ as the \emph{potential function}. 
\end{definition}
The Euler-Lagrange equations for a Lagrangian function that is the geodesic derivative of another function and the relation with the Jacobi equations were studied in \cite{CM91}. 

From formulae \eqref{chi} and \eqref{sfunction}, we can see that the $S$-function is a strong Hamel function (with the distortion $\tau$ as the potential function) if and only if the $\chi$-curvature vanishes.

\begin{lem} \label{lemma:Hproj_inv}
The Euler-Lagrange $1$-form \eqref{EL} is a projective invariant, when restricted to $1$-homogeneous functions. The class of Hamel functions and strong Hamel functions are preserved by projective deformations. 
\end{lem}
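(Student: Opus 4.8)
We need two things: that $\delta_G f$ does not change when we replace $G$ by a projectively related spray $\widetilde G = G - 2P\mathcal C$, provided $f$ is $1$-homogeneous; and then that this invariance immediately carries the notions of Hamel and strong Hamel function across projective deformations. The natural route is to compute directly with the Frölicher–Nijenhuis expression $\delta_G f = \mathcal L_G d_J f - df$ from \eqref{EL}, using the derivation property of $\mathcal L$ and the simple fact that $d_J$ commutes with the passage to $\widetilde G$ (it depends only on $J$, not on the spray).

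First I would write
\begin{eqnarray*}
\delta_{\widetilde G} f = \mathcal L_{\widetilde G} d_J f - df = \mathcal L_G d_J f - df - 2\mathcal L_{P\mathcal C} d_J f = \delta_G f - 2\mathcal L_{P\mathcal C} d_J f,
\end{eqnarray*}
so everything reduces to showing the correction term $2\mathcal L_{P\mathcal C} d_J f$ vanishes on $1$-homogeneous $f$. Expand it with the Leibniz-type rule $\mathcal L_{P\mathcal C} = d i_{P\mathcal C} + i_{P\mathcal C} d$ (Cartan) or, more conveniently here, pull the function out: $\mathcal L_{P\mathcal C}\alpha = P\,\mathcal L_{\mathcal C}\alpha + dP\wedge i_{\mathcal C}\alpha$ for a $1$-form $\alpha$. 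Apply this with $\alpha = d_J f$. Now use the two homogeneity facts already recorded in the excerpt: since $f$ is $1$-homogeneous, $d_J f$ is $0$-homogeneous, so $\mathcal L_{\mathcal C} d_J f = 0$; and $i_{\mathcal C} d_J f = d_{J\mathcal C} f = d_{\mathcal C\,\lrcorner\,J} f$ — here one uses $i_{\mathcal C} d_J f = \mathcal C(f) \cdot(\text{something})$; concretely $i_{\mathcal C}d_Jf = \mathcal L_{J\mathcal C}f - 0 = 0$ because $J\mathcal C = 0$ (the Liouville field is vertical, killed by $J$). Hence both terms vanish and $\delta_{\widetilde G} f = \delta_G f$, which is the first assertion.

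From this the rest is immediate and I would state it in one line: if $\delta_G f = 0$ then $\delta_{\widetilde G} f = 0$, so Hamel functions for $G$ are exactly Hamel functions for $\widetilde G$. For strong Hamel functions, suppose $f = G(f')$ with $f'$ a $0$-homogeneous function; then $\widetilde G(f') = G(f') - 2P\mathcal C(f') = G(f') - 2P\cdot 0 = f$ since $\mathcal C(f') = 0$, so the very same $0$-homogeneous primitive $f'$ witnesses that $f$ is a strong Hamel function for $\widetilde G$.

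**Main obstacle.** The only delicate point is the bookkeeping of the correction term $\mathcal L_{P\mathcal C} d_J f$ in the Frölicher–Nijenhuis formalism — specifically getting the identity $\mathcal L_{P\mathcal C}\alpha = P\mathcal L_{\mathcal C}\alpha + dP\wedge i_{\mathcal C}\alpha$ right and then recognizing that $i_{\mathcal C} d_J f$ vanishes (equivalently, checking in coordinates that $i_{\mathcal C}d_J f = y^i (\partial f/\partial y^i)\,$ is not quite it — rather $d_Jf = (\partial f/\partial y^i)dx^i$, so $i_{\mathcal C}d_Jf = 0$ trivially because $\mathcal C$ has no $\partial/\partial x^i$ component). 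A coordinate check via the last line of \eqref{EL}, $\delta_G f = \{G(\partial f/\partial y^i) - \partial f/\partial x^i\}dx^i$, is the quickest sanity test: replacing $G$ by $G - 2P\mathcal C$ changes $G(\partial f/\partial y^i)$ by $-2P\,\mathcal C(\partial f/\partial y^i) = -2P\,y^j\partial^2 f/\partial y^j\partial y^i = 0$ by Euler's identity applied to the $0$-homogeneous function $\partial f/\partial y^i$. That observation is really the whole proof; I would present the invariant version for elegance and relegate the coordinate computation to a parenthetical remark.
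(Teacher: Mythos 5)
Your proposal is correct and follows essentially the same route as the paper: expand $\mathcal{L}_{\widetilde{G}}=\mathcal{L}_G-2\mathcal{L}_{P\mathcal{C}}$ and kill the correction term using $\mathcal{L}_{\mathcal{C}}d_Jf=0$ ($0$-homogeneity of $d_Jf$) together with $i_{\mathcal{C}}d_Jf=0$ (the semi-basic form annihilates the vertical Liouville field), then note $\widetilde{G}(f')=G(f')$ since $\mathcal{C}(f')=0$ for the strong Hamel part. The coordinate check via Euler's identity is a harmless addition not in the paper, but the argument is the same.
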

\begin{proof}
Consider $f\in C^{\infty}(T_0M)$ a $1$-homogeneous function and $\widetilde{G}=G-2P\mathcal{C}$ the projective deformation of a given spray $G$. Using the fact that $f$ is $1$-homogeneous, it follows that $d_Jf$ is a $0$-homogeneous $1$-form, which means that $\mathcal{L}_{\mathcal{C}}d_Jf=0$. Therefore, we have
\begin{eqnarray*}
\delta_{\widetilde{G}}f=\mathcal{L}_{\widetilde{G}}d_Jf-df = \mathcal{L}_{G}d_Jf-df - 2\mathcal{L}_{P\mathcal{C}}d_Jf = \delta_Gf - 2 P \mathcal{L}_{\mathcal{C}}d_Jf - d_Jf(\mathcal{C})dP=\delta_Gf,
\end{eqnarray*}
and hence $f$ is a Hamel function for the spray $G$ if and only if it is a Hamel function for the spray $\widetilde{G}=G-2P\mathcal{C}$.

If there exists a $0$-homogeneous function $f'$ on $T_0M$ such that $f=G(f')$, then $f=G(f')-2P\mathcal{C}(f')=\widetilde{G}(f')$. Therefore, $f$ is a strong Hamel function for the spray $G$ if and only if it is a strong Hamel function for any projectively related spray $\widetilde{G}=G-2P\mathcal{C}$. 
\end{proof}

We now provide a characterisation of Hamel functions.
\begin{lem}\label{djdhf}
We consider $f\in C^\infty(T_0M)$  a $1$-homogeneous function. Then $f$ is a Hamel function for a spray $G$ if and only if $d_hd_Jf=0$.
\end{lem}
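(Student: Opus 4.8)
The plan is to reduce the statement to a symmetry condition on $\delta(\partial f/\partial y^i)/\delta x^j$ and then check the two implications directly, using only Euler's relation for $1$-homogeneous functions together with $y^iN^k_i=2G^k$ and, from \eqref{delta}, $\delta y^j/\delta x^i=-N^j_i$. Put $f_i:=\partial f/\partial y^i$ (a $0$-homogeneous function). A short computation from \eqref{delta} gives $d_hd_Jf=d_h\bigl(f_i\,dx^i\bigr)=\frac{\delta f_i}{\delta x^j}\,dx^j\wedge dx^i$, so $d_hd_Jf=0$ amounts to $\delta f_i/\delta x^j=\delta f_j/\delta x^i$ for all $i,j$. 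On the other hand $G=y^j\,\delta/\delta x^j$, hence $G(f_i)=y^j\,\delta f_i/\delta x^j$, while \eqref{EL} reads $\delta_Gf=\{G(f_i)-\partial f/\partial x^i\}\,dx^i$; so $\delta_Gf=0$ amounts to $G(f_i)=\partial f/\partial x^i$ for all $i$.

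For $d_hd_Jf=0\Rightarrow\delta_Gf=0$ I would compute directly from the symmetry,
\[
G(f_i)=y^j\frac{\delta f_i}{\delta x^j}=y^j\frac{\delta f_j}{\delta x^i}=\frac{\delta(y^jf_j)}{\delta x^i}-f_j\frac{\delta y^j}{\delta x^i}=\frac{\delta f}{\delta x^i}+N^j_if_j=\frac{\partial f}{\partial x^i},
\]
using the Leibniz rule, then $y^jf_j=f$ with $\delta y^j/\delta x^i=-N^j_i$, and finally $\delta f/\delta x^i=\partial f/\partial x^i-N^j_if_j$. For the converse I would differentiate the Euler--Lagrange equation $G(f_i)=\partial f/\partial x^i$ with respect to $y^j$; after regrouping so that $y^k\,\partial^2f_i/\partial x^k\partial y^j-2G^k\,\partial^3 f/\partial y^i\partial y^j\partial y^k=G(\partial^2 f/\partial y^i\partial y^j)$, one gets
\[
\frac{\partial f_i}{\partial x^j}+G\!\left(\frac{\partial^2 f}{\partial y^i\partial y^j}\right)-2N^k_j\,\frac{\partial^2 f}{\partial y^i\partial y^k}=\frac{\partial f_j}{\partial x^i}.
\]
Interchanging $i$ and $j$ and subtracting, the $G(\cdot)$ terms cancel by the symmetry of $\partial^2 f/\partial y^i\partial y^j$, leaving $\partial f_i/\partial x^j-\partial f_j/\partial x^i=N^k_j\,\partial^2 f/\partial y^i\partial y^k-N^k_i\,\partial^2 f/\partial y^j\partial y^k$, which is precisely $\delta f_i/\delta x^j-\delta f_j/\delta x^i=0$.

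The only step of any substance is this last differentiation: one has to carry the coefficients $N^k_j=\partial G^k/\partial y^j$ through the computation and observe that anti-symmetrising in $(i,j)$ produces exactly the term that converts $\partial/\partial x^i$ into $\delta/\delta x^i$. A cleaner, more conceptual route in the Fr\"olicher--Nijenhuis language of the paper is also available: from $[J,J]=0$ and $[J,h]=0$ (the canonical connection of a spray has vanishing torsion; $[J,h]=0$ follows from $[J,\operatorname{Id}]=0$ and $2[J,[J,G]]=[[J,J],G]=0$) one has $d_J^2=\tfrac12 d_{[J,J]}=0$ and $d_hd_J=-d_Jd_h$, so applying $d_J$ to the identity $\delta_Gf=d_J\mathcal{L}_Gf-2d_hf$ of \eqref{EL} yields $d_J\delta_Gf=2\,d_hd_Jf$, which settles $\delta_Gf=0\Rightarrow d_hd_Jf=0$. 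The converse then follows from $\delta_Gf=i_G\,dd_Jf$ (Cartan's formula together with $i_Gd_Jf=df(JG)=\mathcal{C}(f)=f$, where the $1$-homogeneity of $f$ enters) combined with $i_G(d_vd_Jf)=i_v(i_G\,dd_Jf)=i_v\delta_Gf=0$ (because $G$ is horizontal and $\delta_Gf$ is semi-basic), which gives $\delta_Gf=i_G(d_hd_Jf)$ and hence $d_hd_Jf=0\Rightarrow\delta_Gf=0$.
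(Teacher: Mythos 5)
Your proposal is correct, but your primary argument takes a genuinely different route from the paper's. The paper stays entirely within the Fr\"olicher--Nijenhuis calculus: for the direct implication it applies $d_J$ to the identity $\delta_Gf=d_J\mathcal{L}_Gf-2d_hf$ and uses $d_J^2=0$ together with the (anti)commutation of $d_J$ and $d_h$; for the converse it contracts $d_hd_Jf$ with $G$, using the commutator formula for $i_G$ and $d_h$ to get $i_Gd_hd_Jf=\delta_Gf$. Your main proof instead reduces everything to the coordinate condition $\delta f_i/\delta x^j=\delta f_j/\delta x^i$ and checks both implications by hand; the computations (the Leibniz/Euler manipulation giving $G(f_i)=\partial f/\partial x^i$, and the $\partial/\partial y^j$-differentiation of the Euler--Lagrange equations followed by antisymmetrisation in $(i,j)$) are correct, and this elementary route has the merit of making visible exactly where the hypotheses enter: $1$-homogeneity of $f$ (Euler's relation) and $2$-homogeneity of $G^i$ are used only in the direction $d_hd_Jf=0\Rightarrow\delta_Gf=0$, while $\delta_Gf=0\Rightarrow d_hd_Jf=0$ needs no homogeneity at all. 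Your ``conceptual'' aside is essentially the paper's proof, and in fact supplies details the paper leaves implicit: the justification $[J,h]=0$ (vanishing torsion of the spray connection) for $d_hd_J=-d_Jd_h$, and, in the converse, the decomposition $d=d_h+d_v$ with $\delta_Gf=i_G\,dd_Jf$ and $i_Gd_vd_Jf=0$ (valid because $d_Jf$ is semi-basic, $vG=0$, and $\delta_Gf$ is semi-basic), which replaces the paper's use of the commutation rule $i_Gd_h=-d_hi_G+\mathcal{L}_{hG}+i_{[h,G]}$; both yield $i_Gd_hd_Jf=\delta_Gf$, with $1$-homogeneity entering through $i_Gd_Jf=\mathcal{C}(f)=f$ exactly as in the paper.
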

\begin{proof}
From Definition \ref{dfn:Hamel} and one of the equivalent expressions of the Euler-Lagrange $1$-form \eqref{EL}, we have that $f$ is a Hamel function for the geodesic spray $G$ if and  only if $d_JG(f)-2d_hf=0$. If we apply the derivative with respect to the tangent structure in the previous relation we will get that $d_Jd_hf=d_hd_Jf=0$, which is the direct implication of the result.

For the converse, will assume that $d_hd_Jf=0$. Using $hG=G$ and $J[h,G]-v$, we have that
\begin{equation*}
\begin{aligned}
i_Gd_hd_Jf&=-d_hi_Gd_Jf+\mathcal{L}_{hG}d_Jf+i_{[h,G]}d_Jf\\&=-d_hf+\mathcal{L}_{G}d_Jf-d_vf = \mathcal{L}_Gd_Jf-df=\delta_Gf,
\end{aligned}
\end{equation*} 
and hence the conclusion is obtained.
\end{proof}

In the next lemma, for the particular case when the spray is flat, we prove that Hamel functions are locally a strong Hamel function.
\begin{lem}
Consider the flat spray $G$. 
\begin{itemize}
\item[i)]
A $1$-homogeneous function $f\in C^\infty(T_0M)$ is a Hamel function for the spray $G$ if and only if the semi-basic $1$-form $d_Jf$ is locally a $d_h$-exact $1$-form. 
\item[ii)] Locally, any Hamel function is a strong Hamel function. 
 \end{itemize}
\end{lem}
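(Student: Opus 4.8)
The plan is to derive both parts from Lemma~\ref{djdhf} together with the vanishing of the curvature of the flat spray. For the flat spray $G=y^i\partial/\partial x^i$ one has $N^i_j\equiv 0$, so $d_h$ restricted to semi-basic forms is simply the exterior derivative in the base variables $x^i$ with $y$ treated as a parameter; in particular $R=\frac12[h,h]=0$ and hence $d_h\circ d_h=d_R=0$. Two elementary identities, valid for any spray, will be used: since $f$ is $1$-homogeneous, $i_Gd_Jf=df(JG)=df(\mathcal{C})=\mathcal{C}(f)=f$ by Euler's theorem; and, since the spray is horizontal ($hG=G$), $i_Gd_h\phi=df(hG)$... more precisely $i_Gd_h\phi=(d_h\phi)(G)=d\phi(hG)=d\phi(G)=G(\phi)$ for every function $\phi$.

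For i), suppose first that $d_Jf$ is $d_h$-exact, say $d_Jf=d_hf'$ with $f'\in C^\infty(T_0M)$. Then $d_hd_Jf=d_h(d_hf')=0$, so $f$ is a Hamel function by Lemma~\ref{djdhf}. Conversely, if $f$ is a Hamel function, Lemma~\ref{djdhf} gives $d_hd_Jf=0$, i.e.\ the semi-basic $1$-form $d_Jf=(\partial f/\partial y^i)\,dx^i$ is $d_h$-closed. Reading this, for each fixed $y$, as a closed $1$-form in the base variables on a star-shaped coordinate domain and invoking the Poincar\'e lemma through the standard homotopy operator, say $f'(x,y)=\int_0^1\frac{\partial f}{\partial y^i}\bigl(x_0+t(x-x_0),y\bigr)(x^i-x_0^i)\,dt$, we obtain $f'\in C^\infty(T_0M)$ with $d_hf'=d_Jf$. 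This proves i).

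For ii), let $f$ be a Hamel function and let $f'$ be the primitive produced above. Because $f$ is $1$-homogeneous, $\partial f/\partial y^i$ is $0$-homogeneous, so the homotopy formula gives $f'(x,\lambda y)=f'(x,y)$ for all $\lambda>0$, i.e.\ $f'$ is $0$-homogeneous. Contracting $d_Jf=d_hf'$ with the spray and using the two identities from the first paragraph yields $f=i_Gd_Jf=i_Gd_hf'=G(f')$. Hence $f=G(f')$ with $f'$ a $0$-homogeneous function, which is precisely the statement that $f$ is a strong Hamel function.

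The only step that is not purely formal is the Poincar\'e lemma used in the converse half of i), namely passing from a $d_h$-closed to a $d_h$-exact $1$-form; this is available precisely because for the flat spray $d_h$ acts on semi-basic forms as the de Rham differential in the base variables, so one may use the explicit homotopy operator on a star-shaped coordinate domain. That same explicit operator is what secures the two features needed downstream: smoothness of $f'$ on $T_0M$ and, crucially for ii), its $0$-homogeneity, both inherited from the $0$-homogeneity of $\partial f/\partial y^i$. Everything else is a direct consequence of Lemma~\ref{djdhf}, the flatness identity $d_h\circ d_h=0$, and Euler's theorem on homogeneous functions.
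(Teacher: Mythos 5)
Your proposal is correct and follows essentially the same route as the paper: characterise Hamel functions by $d_hd_Jf=0$ via Lemma~\ref{djdhf}, use flatness ($R=0$, so $d_h^2=d_R=0$) and a Poincar\'e-type lemma for $d_h$ to pass from $d_h$-closed to $d_h$-exact, and then contract $d_Jf=d_hf'$ with $G$ to get $f=G(f')$. The only difference is that you make the Poincar\'e step explicit with the homotopy operator in the base variables, which usefully justifies the smoothness and $0$-homogeneity of $f'$ that the paper asserts without detail.
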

\begin{proof}
i) From Lemma \ref{djdhf}, we have that the $1$-homogeneous function $f$ is a Hamel function for the spray $G$ if and only if $d_hd_Jf=0$, which means that the $1$-form $d_Jf$ is $d_h$-closed. Since $2R=[h,h]=0$, then using a Poincar\'e-type lemma (\cite[Theorem 3.2]{Stone73}) for the derivation $d_h$ we have that $d_Jf$ is $d_h$-closed if and only if it is locally $d_h$-exact, meaning that there exists a locally defined function $f'$ such that $d_Jf=d_hf'$.

ii) For the flat spray $G$ and the Hamel function $f$, there exists a locally defined function $f'$ such that $d_Jf=d_hf'$. Since $i_Gd_Jf=i_Gd_hf'$ implies $f=G(f')$, it follows that the function $f$ is locally a strong Hamel function.
\end{proof}

\section{Strong Hamel functions and symmetries}

We will now show that strong Hamel functions are uniquely determined by special geodesic invariant $1$-forms, which we will call strong dual symmetries. Via the Poincar\'e-Cartan $2$-form, these correspond to a special class of dynamical symmetries of the geodesic spray, which we will call strong dynamical symmetries. 

For a $0$-homogeneous function $f'\in C^{\infty}(T_0M)$, we consider the following $1$-form:
\begin{equation}\label{alpha}
\alpha = i_{[J,G]}\mathcal{L}_Gd_Jf' = \nabla\left(\dfrac{\partial f'}{\partial y^i}\right)dx^i-\dfrac{\partial f'}{\partial y^i}\delta y^i = \left(G\left(\dfrac{\partial f'}{\partial y^i}\right) - 2N^j_i\dfrac{\partial f'}{\partial y^j}\right)dx^i-\dfrac{\partial f'}{\partial y^i} d y^i.
\end{equation} 
The $1$-form $\alpha$ defined in \eqref{alpha} is $0$-homogeneous, which means $\mathcal{L}_{\mathcal{C}}\alpha=0$. The $1$-form \eqref{alpha}, associated to the distortion function $\tau$, was used in \cite{BCC22} to characterize first integrals for Finsler functions with vanishing $\chi$-curvature. 

For a Finsler function $F$, its energy $L=F^2/2$ is a regular Lagrangian. Therefore, for the $1$-form $\alpha$, there exists a unique vector field $X\in\mathfrak{X}(T_0M)$ such that 
\begin{eqnarray}
i_Xdd_JL=\alpha. \label{ixalpha}
\end{eqnarray}
We refer to the vector field $X$ as the symplectic dual of the $1$-form $\alpha$. Using the canonical expression of the symplectic $2$-form, $\omega_L=g_{ij}\delta y^i \wedge dx^j$, and the last two expressions \eqref{alpha} of the $1$-form $\alpha$, we obtain the following expressions of the vector field $X$:
\begin{equation} \label{alphaX}
X=g^{ij}\dfrac{\partial f'}{\partial y^j}\dfrac{\delta}{\delta x^i} + g^{ij}\nabla\left(\dfrac{\partial f'}{\partial y^j}\right)\dfrac{\partial}{\partial y^i} = g^{ij}\dfrac{\partial f'}{\partial y^j}\dfrac{\partial}{\partial x^i} + g^{ij}G\left(\dfrac{\partial f'}{\partial y^j}\right)\dfrac{\partial}{\partial y^i} .
\end{equation}
The second equality in \eqref{alphaX} is due to the fact that the horizontal distribution is a Lagrangian distribution for the symplectic $2$-form $\omega_L$, which is equivalent to the fact that the coefficients $N^i_j$ given by \eqref{delta} satisfy $g^{ij}N_j^k=g^{kj}N_j^i$, \cite{Bucataru07}.

\begin{definition}
A $0$-homogeneous $1$-form $\alpha\in \bigwedge^1(T_0M)$ is called: 
\begin{itemize}
\item[(1)]
a dual symmetry if it is geodesically invariant, namely it satisfies $\mathcal{L}_G\alpha=0$,
\item[(2)] a strong dual symmetry if it is a dual symmetry and the semi-basic $1$-form $i_J\alpha$ is $d_J$-exact, meaning that there exists a non-constant, $0$-homogeneous function $f'$ such that $i_J\alpha=-d_Jf'$.
\end{itemize}
A $(-1)$-homogeneous vector field $X\in \mathfrak{X}(T_0M)$ is called:
\begin{itemize}
\item[(3)] a dynamical symmetry if it is a symmetry of the geodesic spray, meaning $[G,X]=0$,
\item[(4)] a strong dynamical symmetry if it is a dynamical symmetry and the vector field $JX$ is a vertical gradient, meaning that there exists a non-constant, $0$-homogeneous function $f'$ such that $JX=g^{ij}\frac{\partial f'}{\partial y^j} \frac{\partial}{\partial y^i}$,
\item[(5)] an invariant vector field if $X(L)=0$.
\end{itemize}
\end{definition}
A comprehensive discussion on dual symmetries, dynamical symmetries, and their interplay in non-autonomous dynamical systems can be found in \cite{MP95}.

In the next theorem, we show that strong Hamel functions are uniquely determined by strong dual symmetries, which are uniquely determined by strong dynamical symmetries. We also clarify the relationship between the corresponding potential functions.

\begin{theorem}\label{thm:SHF}
We consider $G$ the geodesic spray of a Finsler metric $F$. Then, the following statements are equivalent:
\begin{itemize}
\item[i)] There exists a strong Hamel function for the geodesic spray $G$.
\item[ii)] There exists a strong dual symmetry for $G$.
\item[iii)] There exists a strong dynamical symmetry for $G$.
\end{itemize}
\end{theorem}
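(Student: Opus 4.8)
The plan is to prove the equivalences in a cycle, exploiting the explicit formulas already set up in the excerpt for $\alpha$ in \eqref{alpha} and for the associated vector field $X$ in \eqref{alphaX}. The key dictionary is: given a $0$-homogeneous function $f'$, set $f = G(f')$, build $\alpha = i_{[J,G]}\mathcal{L}_Gd_Jf'$ as in \eqref{alpha}, and let $X$ be the unique field with $i_Xdd_JL = \alpha$. The claim will be that $f$ is a strong Hamel function $\iff$ $\alpha$ is a strong dual symmetry $\iff$ $X$ is a strong dynamical symmetry, and moreover the three objects carry the same information, so existence of one is existence of all.

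First I would establish $\mathrm{i)}\Rightarrow\mathrm{ii)}$. Starting from a strong Hamel function $f = G(f')$ with $\delta_Gf = 0$, I form $\alpha$ from \eqref{alpha}. The semi-basic part is $i_J\alpha = -\tfrac{\partial f'}{\partial y^i}\,dx^i = -d_Jf'$, so $i_J\alpha$ is automatically $d_J$-exact, hence $d_J$-closed; this gives the ``strong'' half of condition (2) for free. The real content is geodesic invariance $\mathcal{L}_G\alpha = 0$. Here I would compute $\mathcal{L}_G\alpha$ using $\alpha = i_{[J,G]}\mathcal{L}_Gd_Jf' = i_{h-v}\mathcal{L}_Gd_Jf'$ together with the Frölicher--Nijenhuis identities $\mathcal{L}_G i_{h-v} - i_{h-v}\mathcal{L}_G = i_{[G,h-v]}$ and the fact that $[G,h] = [G,v] = 0$ for a spray (the nonlinear connection is $G$-invariant). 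Expanding, $\mathcal{L}_G\alpha$ should reduce to a multiple of $\delta_G f = \delta_G G(f')$, which vanishes by hypothesis. Alternatively, and perhaps more cleanly, I would split $\alpha$ into its horizontal and vertical components, $\alpha = \alpha^h + \alpha^v$ with $\alpha^v = -d_Jf'$ and $\alpha^h = \nabla(\partial f'/\partial y^i)\,dx^i$, and show $\mathcal{L}_G(-d_Jf') = \nabla d_Jf' = d_J\mathcal{L}_Gf' - \delta_G f' $-type terms that match $-\alpha^h$ modulo $\delta_G f$. The bookkeeping of which terms are semi-basic versus not is where I expect the main obstacle to lie.

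Next, $\mathrm{ii)}\Rightarrow\mathrm{iii)}$ is essentially transport through the symplectic isomorphism $X \mapsto i_Xdd_JL$. Since $\alpha$ is $0$-homogeneous, $X$ is $(-1)$-homogeneous. Geodesic invariance of $\alpha$ passes to $[G,X] = 0$: from $i_Xdd_JL = \alpha$, apply $\mathcal{L}_G$ and use that $G$ preserves $dd_JL = -\omega_L$ (because $\mathcal{L}_G\omega_L = -\mathcal{L}_G dd_JL = -d(\mathcal{L}_G d_JL) = -d(dL + \delta_G L) $, and for the geodesic spray $\mathcal{L}_G\omega_L = 0$ follows from \eqref{iGddJ}), so $i_{[G,X]}\omega_L = \mathcal{L}_G\alpha = 0$, and nondegeneracy gives $[G,X]=0$. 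For the ``strong'' part, $JX$ should be the vertical gradient of $f'$: reading off \eqref{alphaX}, $JX = g^{ij}(\partial f'/\partial y^j)\,\partial/\partial y^i$, which is exactly $\nabla^v$-type raising of $d_Jf'$; one checks $i_{JX}\omega_L$ or $i_{JX}dd_JL$ recovers $i_J\alpha = -d_Jf'$, the defining property of a vertical gradient. I would cite the characterization that $JX$ is a vertical gradient iff $i_J\alpha$ is $d_J$-closed, which is precisely condition (2). Then $\mathrm{iii)}\Rightarrow\mathrm{i)}$ closes the loop: given a strong dynamical symmetry $X$, write $JX$ as the vertical gradient of some $0$-homogeneous $f'$, set $f = G(f')$, and run the implications backwards — $[G,X]=0$ forces $\mathcal{L}_G\alpha = 0$, whose horizontal/semi-basic decomposition yields $\delta_G f = 0$, so $f$ is a strong Hamel function.

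Throughout, the homogeneity tracking is routine given the preliminaries ($d_Jf'$ is $(-1)$-homogeneous when $f'$ is $0$-homogeneous, $\alpha$ is $0$-homogeneous, $X$ is $(-1)$-homogeneous), so I would state it in passing rather than belabor it. The one genuine subtlety is that the correspondence $f' \leftrightarrow \alpha \leftrightarrow X$ is only a bijection up to the kernel of $f' \mapsto G(f')$ restricted to $0$-homogeneous functions — but since the theorem only asserts \emph{existence}, this does not matter. I anticipate the heart of the argument is the identity, likely worth isolating as a displayed computation, that
\begin{equation*}
\mathcal{L}_G\alpha = -\,\delta_G\big(G(f')\big) \quad\text{modulo the span of } i_J\alpha\text{-type terms that vanish automatically,}
\end{equation*}
or more precisely that the horizontal component of $\mathcal{L}_G\alpha$ equals (a nonzero constant times) $\delta_G f$ while the vertical component vanishes identically; verifying this via the Frölicher--Nijenhuis calculus, using $[G,J] = h - v$, $[G,h]=[G,v]=0$, and $d_h d_J f' $ versus $d_J d_h f'$ commutation relations, is the step I would budget the most care for.
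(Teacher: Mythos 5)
Your overall architecture matches the paper's: the dictionary $f'\mapsto\alpha\mapsto X$ via \eqref{alpha} and \eqref{ixalpha}, the transfer of $\mathcal{L}_G\alpha=0$ into $[G,X]=0$ through the symplectic form (using $\mathcal{L}_Gdd_JL=di_Gdd_JL=-ddL=0$), the identification of the ``strong'' conditions ($i_J\alpha=-d_Jf'$ is $d_J$-exact; $JX$ vertical gradient iff $i_J\alpha$ is $d_J$-closed), and the recovery of $f'$ from a strong dual symmetry via the $d_J$-Poincar\'e lemma for $(-1)$-homogeneous semi-basic forms. However, there is a genuine gap exactly at the step you yourself flag as the heart of the argument: the proof that $\mathcal{L}_G\alpha$ reduces to $\delta_Gf$ is never carried out, and the one concrete tool you propose for it is false. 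For a spray one does \emph{not} have $[G,h]=[G,v]=0$: computing the Fr\"olicher--Nijenhuis bracket gives, for instance, $[G,h](\partial/\partial y^i)=[G,h(\partial/\partial y^i)]-h[G,\partial/\partial y^i]=\delta/\delta x^i\neq 0$, and on horizontal arguments $v[G,h]$ produces the Jacobi endomorphism (curvature); since $h+v=\operatorname{Id}$ forces $[G,v]=-[G,h]$, both brackets are nonzero in general. So the proposed expansion of $\mathcal{L}_G i_{h-v}\mathcal{L}_Gd_Jf'$ via ``$G$-invariance of the connection'' collapses, and your alternative (``split into horizontal and vertical parts and match terms modulo $\delta_Gf$'') is left as an expectation rather than a computation, with the semi-basic bookkeeping -- precisely where the content lies -- unresolved.

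The paper closes this gap with one clean identity that your proposal never reaches: applying the commutation rule $d_J\nabla=\nabla d_J+d_h+2i_R$ to the $0$-homogeneous $f'$ (where $i_Rf'=0$) gives $\nabla d_Jf'=d_Jf-d_hf'$, hence
\begin{equation*}
\alpha=\nabla d_Jf'-d_vf'=d_Jf-d_hf'-d_vf'=d_Jf-df',
\end{equation*}
from which $\mathcal{L}_G\alpha=\mathcal{L}_Gd_Jf-df=\delta_Gf$ is immediate; this single formula powers both the direct implication and (after pinning down the $dx^i$-components of $\alpha$ from the vertical part of $\mathcal{L}_G\alpha=0$, which is also the precise form of the backward step you describe only loosely in iii)$\Rightarrow$i)) the converse. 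To repair your proof, replace the false bracket identity by this commutation rule, or by the equivalent coordinate computation of $\mathcal{L}_G\alpha$ that the paper performs for the converse; the remaining steps of your cycle then go through essentially as in the paper.
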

\begin{proof}
We will prove the equivalence between $i)$ and $ii)$. If $f$ is a strong Hamel function of the geodesic spray $G$, then there exists a $0$-homogeneous potential function $f'$ such that $f=G(f')=\nabla f'$ and $\delta_Gf=0$.

Using the commutation rule $d_J\nabla=\nabla d_J+d_h+2i_R$, we obtain the following equivalent expression of the $1$-form $\alpha$ introduced in \eqref{alpha}:
\begin{eqnarray}
\alpha=\nabla d_Jf'-d_vf'=d_J\nabla f'-d_hf'-d_vf'=d_Jf-df'. \label{alphaff'}
\end{eqnarray} 
We can now verify the geodesic invariance of the $1$-form $\alpha$:
\begin{eqnarray*}
\mathcal{L}_{G}\alpha=\mathcal{L}_{G}d_Jf-\mathcal{L}_{G}df'=\mathcal{L}_{G}d_Jf-dG(f')=\mathcal{L}_{G}d_Jf-df=\delta_Gf=0.
\end{eqnarray*}
From the last expression \eqref{alphaff'} of the $1$-form $\alpha$, we obtain $i_J\alpha=-d_Jf'$, which means that $\alpha$ is a strong dual symmetry.

For the converse, we consider $\alpha$ a $0$-homogeneous $1$-form on $T_0M$, which can be written as
$ \alpha=\alpha_i dx^i+\beta_i dy^i$. We now assume that $i_J\alpha = \beta_i dx^i$ is $d_J$-exact, and hence there exists a $0$-homogeneous function $f'(x,y)$ such that $i_J\alpha=-d_Jf'$.
Therefore, the $1$-form $\alpha$ is given by
\begin{eqnarray} \label{alphaf'}
\alpha=\alpha_i dx^i - \frac{\partial f'}{\partial y^i}dy^i.
\end{eqnarray}
 The Lie derivative of the $1$-form \eqref{alphaf'} along the geodesic spray $G$ is given by:
\begin{eqnarray*}
\mathcal{L}_G\alpha = G(\alpha_i)dx^i + \alpha_i dy^i - G\left(\frac{\partial f'}{\partial y^i}\right) dy^i + \frac{\partial f'}{\partial y^i} 2\left(\frac{\partial G^i}{\partial x^j}dx^j + \frac{\partial G^i}{\partial y^j} dy^j\right).  
\end{eqnarray*}
If the $1$-form $\alpha$ is geodesically invariant, then collecting the vertical components, we obtain 
\begin{eqnarray*}
\alpha_i - G\left(\frac{\partial f'}{\partial y^i}\right) + 2 \frac{\partial f'}{\partial y^j} \frac{\partial G^j}{\partial y^i}=0. 
\end{eqnarray*}
We now use the commutator: 
\begin{eqnarray*}
\left[G, \frac{\partial}{\partial y^i}\right] = \frac{\partial}{\partial x^i} - 2N^k_i\frac{\partial}{\partial y^k},
\end{eqnarray*}
to obtain the components $\alpha_i$:
\begin{eqnarray*}
\alpha_i = G\left(\frac{\partial f'}{\partial y^i}\right) - 2 \frac{\partial f'}{\partial y^j} \frac{\partial G^j}{\partial y^i} = \frac{\partial}{\partial y^i}\left(Gf'\right) + \frac{\partial f'}{\partial x^i}.
\end{eqnarray*}
The function $f:=G(f')$ is $1$-homogeneous and the $1$-form $\alpha$ is given by:
\begin{eqnarray*}
\alpha=\left(\frac{\partial f}{\partial y^i} -  \frac{\partial f'}{\partial x^i}\right) dx^i - \frac{\partial f'}{\partial y^i}dy^i = d_Jf-df',
\end{eqnarray*} 
 which coincides with formula \eqref{alphaff'}. Therefore, we have $0=\mathcal{L}_G\alpha=\delta_Gf$ and hence $f=G(f')$ is a strong Hamel function. 
 
We note that a strong Hamel function and the corresponding strong dual symmetry have the same potential function. 

Now, we prove the equivalence between $ii)$ and $iii)$. For a $1$-form $\alpha\in \bigwedge^1(T_0M)$ we consider $X$ its symplectic dual vector field, given by \eqref{ixalpha}. Therefore, we have
\begin{eqnarray*} 
\mathcal{L}_G\alpha = \mathcal{L}_Gi_Xdd_JL=i_X\mathcal{L}_Gdd_JL +i_{[G,X]}dd_JL = i_{[G,X]}dd_JL. 
\end{eqnarray*}
Since $dd_JL$ is a symplectic form, we have that $\mathcal{L}_G\alpha=0$ if and only if $[G,X]=0$, which means that $\alpha$ is a dual symmetry for the geodesic spray $G$ if and only if $X$ is a dynamical symmetry of $G$. We now show that the dual symmetry $\alpha$ is strong if and only if its dual vector field is a strong dynamical symmetry, with the same potential function. 

For a vector field, $X=X^i{\partial}/{\partial x^i} + Y^i{\partial}/{\partial y^i}\in \mathfrak{X}(T_0M)$, homogeneous of order $-1$, $JX=X^i{\partial}/{\partial y^i}$ is a vertical gradient if there exists a $0$-homogeneous function $f'$ on $T_0M$ such that $g_{ij}X^j=\frac{\partial f'}{\partial y^i}$. 

We start form the duality relation between the $1$-form $\alpha$ and the corresponding vector field $X$, \eqref{ixalpha}, we apply the algebraic derivation $i_J$, and use the fact that $i_Jdd_JL=0$. Hence, we have:
\begin{eqnarray*}
i_J\alpha=i_Ji_Xdd_JL=i_Xi_Jdd_JL - i_{JX}dd_JL=-i_{JX}dd_JL= - i_{X^k\frac{\partial}{\partial y^k}}g_{ij}\delta y^i\wedge dx^j = - g_{ij}X^idx^j.
\end{eqnarray*} 
Therefore $i_J\alpha$ is $d_J$-exact with potential function $-f'$ if and only if $JX$ is a gradient with potential function $f'$. This completes the proof of the equivalence between ii) and iii).   
\end{proof}

As a consequence of the previous calculations and theorem, we can establish further connections between strong Hamel functions (dual symmetries, dynamical symmetries), invariant vector fields and first integrals.

\begin{prop} \label{Noether}
Consider $X\in \mathfrak{X}(T_0M)$ a strong dynamical symmetry. Then $X$ and $JX$ are invariant vector fields.
\end{prop}
\begin{proof}
Consider $X$ a strong dynamical symmetry. Using the defining formulae \eqref{iGddJ} and \eqref{ixalpha}, for the vector fields $G$ and $X$, it follows:
\begin{eqnarray*}
X(L)=i_XdL = - i_Xi_Gdd_JL=i_Gi_Xdd_JL=i_G\alpha = i_G(d_Jf-df')=\mathcal{C}(f)-G(f')=0.
\end{eqnarray*}
Using the expression \eqref{alphaX} of the dynamical symmetry $X$, we obtain
\begin{eqnarray*}
JX(L)=g^{ij}\frac{\partial f'}{\partial y^j} \frac{\partial L}{\partial y^i} = \frac{\partial f'}{\partial y^i} y^i = \mathcal{C}(f')=0.
\end{eqnarray*}
Therefore, both $X$ and $JX$ are invariant vector fields for the energy function $L$.  
\end{proof}

\section{$\chi$-curvature and strong Hamel functions}

As already mentioned, the $S$-function \eqref{sfunction} is a strong Hamel function if and only if the $\chi$-curvature vanishes. Using formula \eqref{alpha} for the distortion $\tau$ and Theorem \ref{thm:SHF}, we obtain that the $S$-function is a strong Hamel function if and only if the following $1$-form is a strong dynamical symmetry, with the potential function $\tau$:
\begin{eqnarray}
\alpha=\nabla I_k dx^k - I_k \delta y^k, \quad I_k=\frac{1}{2}g^{ij}\frac{\partial g_{ij}}{\partial y^k}=\frac{\partial \tau}{\partial y^k}, \ i_J\alpha=-d_J\tau. \label{alphaik}
\end{eqnarray} 
The $1$-form \eqref{alphaik} has also been considered in \cite[Section 8]{Crampin24}, where it was shown that the corresponding vector field \eqref{ixalpha} satisfies $X(L)=0,$ hence it is an invariant vector field.

The geodesic invariance of the $1$-form \eqref{alphaik} was exploited in \cite[Lemma 3.2]{BCC22} to provide a set of $n-1$ first integrals for a Finsler manifold with vanishing $\chi$-curvature (and consequently, with strong Hamel $S$-function), as discussed in \cite[Theorem 1.1]{BCC22}. 

In this section, we explore further connections between strong Hamel functions and $\chi$-curvature.

It is well known that projective deformations preserve the curvature tensor \eqref{r} if and only if the projective factor is a Funk function. Similarly, the Ricci scalar is preserved under projective deformations if and only if the projective factor is a weak Funk function, \cite[Proposition 12.1.3]{Shen01}.

We now prove that the $\chi$-curvature is preserved under projective deformation if and only if the projective factor is a strong Hamel function. Furthermore, we establish the relationship between Funk functions, weak Funk functions, and strong Hamel functions.

We now analyse the behaviour of the $S$-function and $\chi$-curvature under projective deformations.
\begin{theorem}\label{thm:schi}
Let $G$ and $\widetilde{G}=G-2P\mathcal{C}$ be two projectively related geodesic sprays associated to the Finsler metrics $F$ and $\widetilde{F}$. For each of them, we consider the $S$-functions: $S$ and $\widetilde{S}$ associated to the same fixed volume form on $M$, and the corresponding $\chi$-curvatures: $\chi$ and $\widetilde{\chi}$. These are related as follows:
\begin{eqnarray}
\widetilde{S}=S+(n+1)P, \quad \widetilde{\chi}=\chi+\dfrac{n+1}{2}\delta_GP. \label{schiproj}
\end{eqnarray}
The $\chi$-curvature of a Finsler metric is preserved under projective deformations if and only if the projective factor is a strong Hamel function. 
\end{theorem}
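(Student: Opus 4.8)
The plan is to establish the transformation formulae in \eqref{schiproj} first, and then the characterisation follows immediately. To obtain the formula for $\widetilde{S}$, I would start from the definition \eqref{sfunction}. The key observation is that the distortion $\tau = \frac{1}{2}\ln\frac{\det g}{\sigma}$ depends only on the metric tensor $g$ and the chosen volume form $\sigma$, neither of which changes under a projective deformation of the spray (the spray does not enter the definition of $g_{ij}$ or $\tau$). Hence $\widetilde{\tau} = \tau$, and so $\widetilde{S} = \widetilde{G}(\tau) = G(\tau) - 2P\mathcal{C}(\tau) = S - 2P\mathcal{C}(\tau)$. It then remains to compute $\mathcal{C}(\tau) = y^k\partial\tau/\partial y^k = y^k I_k$, where $I_k$ is as in \eqref{alphaik}. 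A standard computation gives $y^k I_k = \frac{1}{2}g^{ij}y^k\partial g_{ij}/\partial y^k$; using the $0$-homogeneity of $g_{ij}$ (so that $y^k\partial g_{ij}/\partial y^k = 0$) one might naively get zero, so instead I would use $\tau$'s behaviour directly: since $\det g$ is $0$-homogeneous in $y$ (each $g_{ij}$ is $0$-homogeneous), $\mathcal{C}(\ln\det g)=0$, which would give $\mathcal{C}(\tau)=0$ and hence $\widetilde{S}=S$ — but this contradicts \eqref{schiproj}. The resolution is that $\mathcal{C}(\tau)$ must be recomputed with the correct homogeneity bookkeeping: in fact $S$ is $1$-homogeneous, and the $(n+1)$ factor arises because under $\widetilde{G}=G-2P\mathcal{C}$ the Christoffel-type symbols shift, changing $\widetilde{G}^i = G^i + Py^i$, so $\widetilde{\tau}$ is computed relative to the same $g$ but the relevant identity is $\widetilde S = \widetilde G(\tau)$ with $\tau$ unchanged; the $(n+1)P$ comes from the known relation $\widetilde{G}(\tau) - G(\tau)$ evaluated via $2\widetilde{G}^i = 2G^i + 2Py^i$ together with the coordinate expression of $S$ involving $\partial G^i/\partial y^i$. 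I would therefore instead use the classical coordinate formula $S = \frac{\partial G^i}{\partial y^i} - y^i\frac{\partial \tau}{\partial x^i} \cdot(\text{correction})$, or more cleanly cite that $S$ equals the trace-type quantity whose projective shift is well documented; the cleanest route is $\widetilde G^i = G^i + Py^i \Rightarrow \partial \widetilde G^i/\partial y^i = \partial G^i/\partial y^i + nP + \mathcal C(P) = \partial G^i/\partial y^i + (n+1)P$ using $\mathcal C(P)=P$, which yields $\widetilde S = S + (n+1)P$.

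For the $\chi$-curvature formula, I would apply the operator $\frac{1}{2}\delta_{\widetilde G}$ to $\widetilde S$. Using Lemma \ref{lemma:Hproj_inv} (or rather the projective invariance of $\delta_G$ on $1$-homogeneous functions proved there), and noting that $\widetilde S - S = (n+1)P$ is $1$-homogeneous, we get
\begin{eqnarray*}
\widetilde\chi = \tfrac{1}{2}\delta_{\widetilde G}\widetilde S = \tfrac{1}{2}\delta_{\widetilde G}\big(S + (n+1)P\big) = \tfrac{1}{2}\delta_G S + \tfrac{n+1}{2}\delta_G P = \chi + \tfrac{n+1}{2}\delta_G P,
\end{eqnarray*}
where the middle equality uses that $\delta_{\widetilde G}$ and $\delta_G$ agree on $1$-homogeneous functions. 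This establishes the second formula in \eqref{schiproj}.

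Finally, for the characterisation: $\widetilde\chi = \chi$ holds if and only if $\delta_G P = 0$, i.e.\ if and only if $P$ is a Hamel function for $G$. But $P$ is always the geodesic derivative of a $0$-homogeneous function — indeed, since $\widetilde G^i = G^i + Py^i$ encodes the reparametrisation, $P$ arises as $P = \mathcal C$-scaled data; more directly, a $1$-homogeneous $P$ with $\delta_G P = 0$ being a \emph{strong} Hamel function requires exhibiting a $0$-homogeneous $f'$ with $P = G(f')$. Here the natural candidate is $f' = \frac{1}{n+1}(\widetilde\tau - \tau)$, but since $\widetilde\tau = \tau$ that vanishes, so instead I would argue: $\widetilde\chi - \chi = \frac{n+1}{2}\delta_G P$, and by \eqref{chi} applied in the $\widetilde G$-geometry together with $\widetilde S = S + (n+1)P$, the vanishing of $\widetilde\chi - \chi$ says exactly that $(n+1)P = \widetilde S - S$, with $\widetilde S$ being automatically a strong Hamel function when $\widetilde\chi$ vanishes — but we want the statement for arbitrary (not necessarily $\chi$-flat) metrics. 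The correct formulation: $\widetilde\chi = \chi \iff \delta_G P = 0 \iff P$ is a Hamel function; and the equivalence with \emph{strong} Hamel is the content to verify — I expect the main obstacle to be showing that whenever $P$ is the projective factor relating two geodesic sprays and $\delta_G P = 0$, then $P$ is automatically strong, which should follow from the fact that $P = \frac{1}{n+1}(\widetilde S - S)$ always holds by the first formula in \eqref{schiproj}, and $\widetilde S - S = \widetilde G(\widetilde\tau) - G(\tau) = G(\widetilde\tau - \tau) - 2P\mathcal C(\widetilde\tau) + \dots$; pinning down this last identity so that $P$ is visibly $G$ applied to a $0$-homogeneous function is where the care is needed, and I would handle it by writing $P = \frac{1}{n+1}G(\phi)$ for an explicit $0$-homogeneous $\phi$ read off from the coordinate computation $\partial\widetilde G^i/\partial y^i - \partial G^i/\partial y^i = (n+1)P$, concluding via Lemma \ref{lemma:Hproj_inv} that the strong Hamel property is projectively invariant so it suffices to check it for one representative.
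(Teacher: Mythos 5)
There is a genuine gap, and it originates in a misreading of the setting. The theorem is about two projectively related \emph{geodesic} sprays, i.e.\ two Finsler metrics $F$ and $\widetilde F$ with distinct metric tensors $g$ and $\widetilde g$; consequently $\widetilde S=\widetilde G(\widetilde\tau)$ with $\widetilde\tau=\frac{1}{2}\ln(\det\widetilde g/\sigma)$, not $\widetilde G(\tau)$. Your premise $\widetilde\tau=\tau$ is therefore false, and the ``contradiction'' $\widetilde S=S$ you run into is a symptom of that, not of any homogeneity bookkeeping; since you never diagnose it, the first formula in \eqref{schiproj} is never actually established. Your patched route via $\partial\widetilde G^i/\partial y^i-\partial G^i/\partial y^i=(n+1)P$ can be made to work, but only after proving the identity $N^i_i=\frac{1}{2}G(\ln\det g)$ (equivalently $S=\partial G^i/\partial y^i-\frac{1}{2}y^i\partial\ln\sigma/\partial x^i$), which follows from $\nabla g_{ij}=0$ and must be applied to \emph{both} metrics; this is precisely the computational core of the paper's proof, and you only gesture at it (``cite that $S$ equals the trace-type quantity\ldots''), while the formula you do write, $S=\partial G^i/\partial y^i-y^i\partial\tau/\partial x^i\cdot(\text{correction})$, is not correct as stated. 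By contrast, your derivation of the second formula via the projective invariance of $\delta_G$ on $1$-homogeneous functions (Lemma \ref{lemma:Hproj_inv}) is exactly the paper's argument and is fine.

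The same false premise sinks the ``strong'' half of the characterisation, which you explicitly leave open: having $\widetilde\chi=\chi\iff\delta_GP=0$ only gives that $P$ is a Hamel function. The missing step is that in this two-metric setting $P$ is \emph{automatically} the geodesic derivative of a $0$-homogeneous function, and the candidate you discarded, $f'=\frac{1}{n+1}(\widetilde\tau-\tau)=\frac{1}{2(n+1)}\ln(\det\widetilde g/\det g)$, is the right one (it vanished for you only because you had set $\widetilde\tau=\tau$). Indeed, from $\widetilde N^i_j=N^i_j+y^i\partial P/\partial y^j+P\delta^i_j$ one gets $\widetilde N^i_i-N^i_i=(n+1)P$, and combining with $N^i_i=\frac{1}{2}G(\ln\det g)$, $\widetilde N^i_i=\frac{1}{2}\widetilde G(\ln\det\widetilde g)$ yields $2(n+1)P=\widetilde G(\ln\det\widetilde g)-G(\ln\det g)=G\bigl(\ln(\det\widetilde g/\det g)\bigr)$, the last equality because $\ln\det\widetilde g$ is $0$-homogeneous so the $-2P\mathcal C$ contribution drops out. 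Hence $P=G(f')$ with $f'$ $0$-homogeneous, so Hamel implies strong Hamel for the projective factor, which is what closes the equivalence in the paper's proof.
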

\begin{proof}
For a Finsler metric $F$, the dynamical covariant derivative of its
metric tensor vanishes, $\nabla g_{ij}=0$, which means: $G(g_{ij})-g_{im}N_j^m-g_{mj}N_i^m=0$. Contracting with $g^{ij}$, we obtain:
\begin{eqnarray*}
g^{ij}G(g_{ij})=g^{ij}(g_{im}N_j^m+g_{mj}N_i^j)=2N_i^i, \quad
  \textrm{and \ hence \ } 
  N_i^i=\frac{1}{2}G(\ln (\det g) ).
\end{eqnarray*}
The two Finsler metrics $F$ and $\widetilde{F}$ being projectively
related, their geodesic sprays and nonlinear connections are connected through: 
\begin{eqnarray*}
\widetilde{G}=G-2P\mathcal{C}, \quad \widetilde{G}^i=G^i+Py^i, \quad \widetilde{N}^i_j=N^i_j+\frac{\partial P}{\partial y^j}y^i+P\delta^i_j.
\end{eqnarray*}
Taking the trace of the last formula and using the $1$-homogeneity of the projective factor $P$, we can recover this as follows: 
\begin{eqnarray*}
P= \frac{1}{n+1}\left(\widetilde{N}^i_i-N_i^i\right) =
  \frac{1}{2(n+1)}\left(\widetilde{G}(\ln\det \widetilde{g})-G(\ln\det g)\right)
   \label{p1} 
\end{eqnarray*}
Using the definition \eqref{sfunction} of the $S$-function and distortion $\tau$, for the two projectively related Finsler functions $F$ and $\widetilde{F}$, we have:
\begin{eqnarray*}
\widetilde{S}-S=\widetilde{G}(\widetilde{\tau})-G(\tau)=\frac{1}{2}\left(\widetilde{G}(\ln\det \widetilde{g})-G(\ln\det g)\right) = (n+1)P,
\end{eqnarray*}
which proves the first formula in \eqref{schiproj}.

Since the $S$-function is $1$-homogeneous, according to the first part of Lemma \ref{lemma:Hproj_inv}, we have that  
\begin{eqnarray*}
2\widetilde{\chi}=\delta_{\widetilde{G}}\widetilde{S}=\delta_G\widetilde{S}=\delta_G(S+(n+1)P)=2\chi+(n+1)\delta_GP,
\end{eqnarray*}
which gives the second formula \eqref{schiproj}.

From the second formula \eqref{schiproj} we have that $\widetilde{\chi}=\chi$ if and only if $\delta_GP=0$, hence $P$ is a Hamel function. Moreover, 
\begin{eqnarray*}
2(n+1)P=G(\ln(\det \widetilde{g}/\det g)), \label{pdet} 
\end{eqnarray*}
which shows that the projective factor $P$ is a strong Hamel function, with the potential function $\ln(\det \widetilde{g}/\det g)/2(n+1)$.
\end{proof}
We can use the result of Theorem \ref{thm:schi} to recover, as a corollary, the Finslerian version of Beltrami's Theorem, which was proven with different techniques in \cite{BC20a, BC20b}.
\begin{cor}
On a manifold of dimension $n>2$, consider $F$ and $\widetilde{F}$ two projectively related Finsler metrics and assume that $F$ has constant flag curvature. Then, $\widetilde{F}$ also has constant flag curvature if and only if the projective factor $P$ is a strong Hamel function.
\end{cor}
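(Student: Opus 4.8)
The plan is to deduce the corollary from Theorem~\ref{thm:schi} together with two standard facts about Finsler metrics of scalar flag curvature, both of which require $n>2$: \emph{(a)} being of scalar flag curvature is a projective invariant (equivalently, the Weyl curvature is a projective invariant that vanishes exactly for metrics of scalar flag curvature when $n>2$; see \cite{Shen01}); and \emph{(b)} a Schur-type statement, that a Finsler metric of scalar flag curvature with $n>2$ has constant flag curvature if and only if its $\chi$-curvature vanishes \cite{Shen21}. I would record \emph{(a)} and \emph{(b)} explicitly, with references, since the remainder of the argument is a short chain of equivalences.

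First I would observe that, since $F$ has constant flag curvature, it is in particular of scalar flag curvature; by \emph{(a)} the projectively related metric $\widetilde{F}$ is then of scalar flag curvature as well, and by \emph{(b)} the $\chi$-curvature of $F$ vanishes, $\chi=0$. Next I would invoke Theorem~\ref{thm:schi}: the projective factor $P$ is a strong Hamel function if and only if the $\chi$-curvature is preserved, i.e. $\widetilde{\chi}=\chi$, which in view of $\chi=0$ is the same as $\widetilde{\chi}=0$. Finally, applying \emph{(b)} to $\widetilde{F}$, which is of scalar flag curvature with $n>2$, gives $\widetilde{\chi}=0$ if and only if $\widetilde{F}$ has constant flag curvature. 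Chaining these equivalences yields that $\widetilde{F}$ has constant flag curvature if and only if $\widetilde{\chi}=0$, if and only if $\widetilde{\chi}=\chi$, if and only if $P$ is a strong Hamel function.

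The hard part is not the bookkeeping above but the two cited inputs, above all fact \emph{(b)}: that within the class of metrics of scalar flag curvature, vanishing $\chi$-curvature is equivalent to constant flag curvature. This is precisely where the hypothesis $n>2$ enters, since in dimension two every Finsler surface is of scalar flag curvature and Schur's lemma fails, so the equivalence breaks down. If one prefers a self-contained argument to a citation, \emph{(b)} can be derived from the known formula expressing $\chi$, for a metric of scalar flag curvature $\kappa=\kappa(x,y)$, in terms of the fibre derivatives of $\kappa$: $\chi$ vanishes exactly when $\kappa$ depends only on $x$, and for $n>2$ Schur's lemma then forces $\kappa$ to be constant. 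The projective invariance \emph{(a)} is likewise classical, so once \emph{(a)} and \emph{(b)} are in hand the corollary follows immediately from Theorem~\ref{thm:schi}.
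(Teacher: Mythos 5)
Your proof is correct and follows essentially the same route as the paper: the paper invokes the Li--Shen characterization that constant flag curvature is equivalent to an isotropic geodesic spray (a projectively invariant condition, which for $n>2$ is the same as your scalar--flag--curvature/Weyl input) together with vanishing $\chi$-curvature, and then applies Theorem \ref{thm:schi} exactly as you do. Your packaging via projective invariance of scalar flag curvature plus a Schur-type statement is just a different phrasing and citation of that same input.
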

\begin{proof}
A Finsler metric has constant curvature if and only if the geodesic spray is isotropic and the $\chi$-curvature vanishes, \cite{LS18}.  Since the isotropy condition is projectively invariant, then the second formula \eqref{schiproj} implies that if one Finsler metric has constant flag curvature then the other also has constant curvature if and only if the projective factor is a strong Hamel function.
\end{proof}
An alternative expression for the second formula \eqref{schiproj}, along with its utility in proving a particular Finslerian version of Beltrami's Theorem in the flat case, has been derived in \cite[Proposition 1.1, Theorem 1.2]{CCQ22}.

We now establish the relation between Funk functions, weak Funk functions and Hamel functions. First, we recall their definitions.
\begin{definition}
For a given spray $G$, a non-vanishing, $1$-homogeneous function $f$ on $T_0M$ is called a:
\begin{itemize}
\item[i)] Funk function, if it satisfies $d_hf=fd_Jf$;
\item[ii)] weak Funk function, if it satisfies $G(f)=f^2$.
\end{itemize} 
\end{definition}
Next, we present the relation between these functions.
\begin{prop} \label{lem:FunkHamel}
A $1$-homogeneous function $f$ on $T_0M$ is a Funk function if and only if it is both a Hamel function and a weak Funk function.
\end{prop}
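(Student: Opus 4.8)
The plan is to route both implications through the Euler--Lagrange expression $\delta_G f = d_J(\mathcal{L}_G f) - 2\,d_h f$ from \eqref{EL}, which for a $1$-homogeneous $f$ reads $\delta_G f = d_J(G(f)) - 2\,d_h f$ since $\mathcal{L}_G f = G(f)$. The point I want to isolate is that, for $1$-homogeneous $f$, the Funk equation $d_h f = f\,d_J f$ is equivalent to the conjunction of $G(f) = f^2$ and $\delta_G f = 0$; together with the remark preceding the statement that a Funk function is not merely Hamel but \emph{strong} Hamel, this yields exactly the asserted equivalence.

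First I would treat the direction $\Rightarrow$. Contracting the Funk equation with the spray and using $i_G d_h f = G(f)$ and $i_G d_J f = \mathcal{C}(f) = f$ gives $G(f) = f^2$, so a Funk function is a weak Funk function. Substituting $G(f) = f^2$ into the displayed form of $\delta_G f$, together with $d_J(f^2) = 2f\,d_J f$ and the Funk equation, yields $\delta_G f = 2f\,d_J f - 2f\,d_J f = 0$, so $f$ is a Hamel function (equivalently $d_h d_J f = 0$, by Lemma \ref{djdhf}). For the ``strong'' refinement I would observe that on $\{f \neq 0\}$ the Funk equation rewrites as $d_J f = d_h(\ln|f|)$, hence $f = i_G d_J f = G(\ln|f|)$; correcting $\ln|f|$ by $\ln\psi$ for a $1$-homogeneous first integral $\psi$ of $G$ (in the Finsler case one may take $\psi = F$, since $G(F) = 0$) gives the $0$-homogeneous function $f' = \ln|f/\psi|$ with $G(f') = f$, so $f$ is a strong Hamel function; the degenerate case $f \equiv 0$ is trivial.

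For the converse, assume $f$ is a strong Hamel function and a weak Funk function. Being strong Hamel, $f$ is in particular Hamel, so $\delta_G f = 0$; being weak Funk, $G(f) = f^2$. Inserting both into $\delta_G f = d_J(G(f)) - 2\,d_h f$ and using $d_J(f^2) = 2f\,d_J f$ gives $0 = 2f\,d_J f - 2\,d_h f$, i.e. $d_h f = f\,d_J f$, so $f$ is a Funk function. Note that this half uses only that $f$ is Hamel; the ``strong'' hypothesis is needed only so that the forward implication is available and the equivalence is genuine.

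The computational heart --- that the Funk equation is equivalent to $\{G(f) = f^2,\ \delta_G f = 0\}$ --- is a two-line manipulation of \eqref{EL}, so the real obstacle is the upgrade from ``Hamel'' to ``strong Hamel'' in the $\Rightarrow$ direction: one must exhibit a $0$-homogeneous antiderivative of $f$ along $G$. The delicate point is homogeneity bookkeeping, namely that $\ln|f|$ already satisfies $G(\ln|f|) = f$ but has Liouville derivative $1$, so one must cancel it with a first integral of $G$ of the right homogeneity without altering $G(f')$ --- which works exactly because $G$ annihilates $\psi$.
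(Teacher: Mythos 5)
Your core computation is correct and is essentially the paper's argument in different packaging. The paper characterises Hamel functions by \eqref{hamelnabla}, $d_hf=\nabla d_Jf$, and weak Funk functions by \eqref{weakFunk}, $d_hf=2f\,d_Jf-\nabla d_Jf$ (obtained by applying $d_J$ to $\nabla f=f^2$ via the commutation rule $d_J\nabla=\nabla d_J+d_h+2i_R$), and then observes that the conjunction of the two identities is equivalent to $d_hf=f\,d_Jf$. You instead work with the equivalent expression $\delta_Gf=d_J(G(f))-2d_hf$ from \eqref{EL} and obtain the same equivalence by contracting with $G$ and substituting, which avoids the commutation rule; the converse half, as you correctly note (and as in the paper), uses only that $f$ is Hamel. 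The genuine added value of your write-up is that you actually prove the implication ``Funk $\Rightarrow$ strong Hamel'', which the paper only asserts in the sentence preceding the proposition; your primitive $f'=\ln\left(\abs{f}/F\right)$ is the natural choice in the Finsler setting (only $G(F)=0$ is needed, and in fact $d_hF=0$, so even $d_hf'=d_Jf$).

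Two caveats on that step. First, it uses the Finsler function $F$ as a positive $1$-homogeneous first integral, so it is tied to the geodesic-spray setting, whereas the definition of Funk and weak Funk functions is given for an arbitrary spray; this matches the ambient context of the section, but it should be stated. Second, and more substantively, the dichotomy ``$f\neq 0$ everywhere or $f\equiv 0$'' is not exhaustive: a Funk function can vanish on a proper nonempty subset of $T_0M$. For instance, for the flat spray, $f=c(x^1)y^1$ with $c'=c^2$ satisfies $\partial f/\partial x^i=f\,\partial f/\partial y^i$ and vanishes exactly on $\{y^1=0\}$; there $\ln\left(\abs{f}/F\right)$ blows up, so your construction does not yield a smooth $0$-homogeneous primitive on all of $T_0M$ (in this flat example the conclusion is still saved by Lemma \ref{djdhf} together with the Poincar\'e-type argument of the flat case, but that is a different argument). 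So your strong-Hamel upgrade is complete only on the open set where $f$ does not vanish; to be fair, the paper's own proof is silent on exactly this point.
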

\begin{proof} 
If we start with a Funk function $f$, by contracting $d_hf=fd_Jf$ with the geodesic spray $G$, we obtain that $f$ is a weak Funk function: $Gf=f^2$. 

For a $1$-homogeneous function $f$, the Euler-Lagrange $1$-form \eqref{EL} can be written as follows:
\begin{eqnarray}
\delta_Gf=d_J(Gf)-2d_hf=d_Jf^2-2d_hf=2(fd_Jf-d_hf). \label{Funk_Hamel}
\end{eqnarray}
With the help of formula \eqref{Funk_Hamel} we now see that the equivalence $d_hf=fd_Jf$ $\Longleftrightarrow$ $Gf=f^2$ and $\delta_Gf=0$ is true. This means that $f$ is a Funk function if and only if it is a  Hamel function and a weak Funk function.   
\end{proof}

\begin{rem}
We note that for each of the two implications of Proposition \ref{lem:FunkHamel}, the condition $G(f)=f^2$ is equivalent to $f=G(\ln |f|)$. Therefore, the $1$-homogeneous function $f$ admits a potential function $f'=\ln |f|$, however this is not $0$-homogeneous. 
\end{rem} 
 
Next, we show how one can obtain examples of strong Hamel functions using projectively related Finsler metrics and some information about the projective factor, following some ideas from  \cite{Cretu20}. Consider $\widetilde{F}$ a Finsler metric projectively related to a spray $G$, such that the projective factor is a weak Funk function. It follows that $\widetilde{F}$ is a strong Hamel function.

Indeed, if $\widetilde{G}$ is the geodesic spray of the Finsler metric $\widetilde{F}$, then it is projectively related to the spray $G$, $\widetilde{G}=G-2P\mathcal{C}$. Since $P$ is a weak Funk function for the spray $G$, it follows that $G(P)=P^2$. Moreover, the projective factor satisfies $G(\widetilde{F})=2P\widetilde{F}$. Therefore, 
\begin{eqnarray*}
G\left(\dfrac{\widetilde{F}}{P}\right)=\dfrac{G(\widetilde{F})P - \widetilde{F} G(P)}{P^2}= \dfrac{2P\widetilde{F}P - \widetilde{F} P^2}{P^2} = \widetilde{F}.
\end{eqnarray*}
Since $\widetilde{F}$ is a Hamel function for the spray $G$, and using the above relation, we obtain that $\widetilde{F}$ is a strong Hamel function, with the potential $\widetilde{F}/P$.

\subsection*{Acknowledgements} We express our thanks to Mike Crampin for his comments and suggestions on this work. We are grateful to the anonymous referee, whose insightful and detailed comments helped us to improve the results and led us to correct some gaps in the proofs.

\end{document}